\newtheorem{thm}{Theorem}
\newtheorem{lemma}[thm]{Lemma}
\newtheorem{prop}[thm]{Proposition}
\newtheorem{conj}{Conjecture}
\newcommand{\wt}[1] {{\widetilde{#1}}}
\newcommand{\Z} {\mathbb{Z}}
\newcommand{\N} {\mathbb{N}}
\newcommand{\Inv} {\mathrm{Inv}}
\newcommand{\affinv}{\Inv_{\wt{S}_n}}
\author{Andrew Crites}
\thanks{Andrew Crites acknowledges support from grant DMS-0800978 from the National Science Foundation.}
\title{Enumerating Pattern Avoidance for Affine Permutations}
\address{Department of Mathematics, University of Washington, Box 354350, Seattle, Washington, 98195-4350, acrites@math.washington.edu}
\keywords{pattern avoidance, affine permutation, generating function, Catalan number}
\begin{document}
\begin{abstract}
In this paper we study pattern avoidance for affine permutations.
In particular, we show that for a given pattern $p$, there are only finitely many affine permutations in $\widetilde{S}_n$
that avoid $p$ if and only if $p$ avoids the pattern 321.
We then count the number of affine permutations that avoid a given pattern $p$ for each $p$ in $S_3$,
as well as give some conjectures for the patterns in $S_4$.
\end{abstract}

\maketitle

\section{Introduction}
\label{sec:in}
Given a property $Q$, it is a natural question to ask if there is a simple characterization of all permutations with property $Q$.
For example, in \cite{LakSan} the permutations corresponding to smooth Schubert varieties
are exactly the permutations that avoid the two patterns 3412 and 4231.
In \cite{Tenner} it was shown that the permutations with Boolean order ideals
are exactly the ones that avoid the two patterns 321 and 3412.
A searchable database listing which classes of permutations avoid certain patterns can be found at \cite{TennerDatabase}. 

Since we know pattern avoidance can be used to describe useful classes of permutations,
we might ask if we can enumerate the permutations avoiding a given pattern or set of patterns.
For example, in \cite{MarcusTardos} it was shown that if $S_n(p)$ is the number of permutations in the symmetric group, $S_n$, that avoid the pattern $p$,
then there is some constant $c$ such that $S_n(p)\le c^n$.
Thus the rate of growth of pattern avoiding permutations is bounded.
This result was known as the Stanley-Wilf conjecture, now called the Marcus-Tardos Theorem.

We can express elements of the affine symmetric group, $\wt{S}_n$, as an infinite sequence of integers,
and it is still natural to ask if there exists a subsequence with a given relative order.
Thus we can extend the notion of pattern avoidance to these affine permutations and
we can try to count how many $\omega\in\wt{S}_n$ avoid a given pattern.

For $p\in S_m$, let \begin{equation}f_n^p=\#\left\{\omega\in\wt{S}_n:\omega\textrm{ avoids }p\right\}\label{eqn:genfuncoeff}\end{equation}
and consider the generating function
\begin{equation}f^p(t)=\sum_{n=2}^\infty f_n^pt^n.\label{eqn:genfun}\end{equation}
For a given pattern $p$ there could be infinitely many $\omega\in\wt{S}_n$ that avoid $p$.
In this case, the generating function in \eqref{eqn:genfun} is not even defined.
As a first step towards understanding $f^p(t)$, we will prove the following theorem.
\begin{thm}
\label{mainthm}
Let $p\in S_m$.
For any $n\ge2$ there exist only finitely many $\omega\in\wt{S}_n$ that avoid $p$ if and only if $p$ avoids the pattern 321.
\end{thm}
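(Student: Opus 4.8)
The plan is to treat the two implications separately, using the number of inversions $\ell(\omega)=\#\{(i,j):1\le i\le n,\ i<j,\ \omega(i)>\omega(j)\}$ of $\omega$ as the basic measure of size. This quantity is finite, and for every $N$ there are only finitely many $\omega\in\wt{S}_n$ with $\ell(\omega)\le N$: this is the usual finiteness of the length fibres of a Coxeter group, and can also be checked directly by bounding the window entries $\omega(1),\dots,\omega(n)$ in terms of $\ell(\omega)$.

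For the implication ``$p$ contains $321$ $\Rightarrow$ there are infinitely many $\omega\in\wt{S}_n$ avoiding $p$'', note that if $\omega$ contains $p$ then it contains $321$, so it is enough to produce infinitely many $\omega\in\wt{S}_n$ that avoid $321$. I would do this with an explicit family whose plot $\{(i,\omega(i)):i\in\Z\}$ is a union of two increasing subsequences; such an $\omega$ has no decreasing subsequence of length three and hence avoids $321$. One convenient choice takes one chain to be the points in the columns congruent to $1\bmod n$ and the other to be the remaining $n-1$ columns of each period, filled so that $\omega(2)<\omega(3)<\cdots<\omega(n)<\omega(2)+n$ and so that the residues and the sum in a window come out correctly; explicitly, $\omega^{(c)}=\bigl[\,(2n-1)-(n-1)c,\ c,\ c+1,\dots,\,c+n-2\,\bigr]$ lies in $\wt{S}_n$ for every $c\in\Z$, these are pairwise distinct, and each is $321$-avoiding by the preceding remark.

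For the implication ``$p$ avoids $321$ $\Rightarrow$ there are only finitely many $\omega\in\wt{S}_n$ avoiding $p$'', it suffices by the first paragraph to exhibit $N=N(p,n)$ with $\ell(\omega)>N\Rightarrow\omega$ contains $p$. The plan: suppose $\ell(\omega)$ is large. From $\ell(\omega)=\sum_{i=1}^{n}\#\{j>i:\omega(j)<\omega(i)\}$, pigeonhole gives an $i_0\in\{1,\dots,n\}$ admitting at least $\ell(\omega)/n$ positions $j>i_0$ with $\omega(j)<\omega(i_0)$; a second pigeonhole on $j\bmod n$ yields a residue $j_0\ne i_0$ and at least $\ell(\omega)/n^2$ integers $d$ with $j_0+dn>i_0$ and $\omega(j_0+dn)=\omega(j_0)+dn<\omega(i_0)$. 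Hence the two ``parallel'' increasing subsequences $A=(\omega(i_0+dn))_{d}$ and $B=(\omega(j_0+dn))_{d}$ --- each of which gains $n$ in value whenever its column gains $n$ --- have the feature that $B$ sits below $A$ by the constant gap $\omega(i_0)-\omega(j_0)$, this gap grows without bound with $\ell(\omega)$, and $A$ and $B$ may be followed over as many steps as we please.

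It remains to combine $A$ and $B$ with the shape of $p$, which reduces to a statement purely about permutations: the merge of two parallel increasing subsequences, followed over enough steps and with a large enough gap between them (both thresholds depending only on $m$), contains \emph{every} $321$-avoiding permutation of size $m$. I would prove this by writing the given $321$-avoiding $p$ as a merge of two increasing subsequences $X$ and $Y$, then embedding $X$ into $A$ order-preservingly and $Y$ into $B$, using the crucial flexibility that a point of $B$ can be chosen either far below every point of $A$ (small index) or --- by taking its index close to the gap --- at essentially any height comparable with the points of $A$; that is exactly what is needed to match both the order of positions and the order of values that $p$ prescribes. Carrying out this embedding cleanly, and thereby making the dependence $N=N(p,n)$ explicit, is the main obstacle; the rest is bookkeeping.
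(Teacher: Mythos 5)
Your first direction is complete and is essentially the paper's argument: you exhibit an explicit infinite family in $\wt{S}_n$ whose plot is a union of two increasing chains, hence $321$-avoiding, hence avoiding any $p$ containing $321$ (the paper uses the window $[1-k,2-k,\dots,n-1-k,n+k(n-1)]$; your $[\,(2n-1)-(n-1)c,\ c,\ c+1,\dots,c+n-2\,]$ works just as well, and your residue and window-sum checks go through). Your reduction in the other direction is also sound and in fact a little cleaner than the paper's at that stage: instead of invoking Shi's length formula, your double pigeonhole on the affine inversions produces residues $i_0,j_0$ whose translate chains $A$ and $B$ are two parallel increasing sequences with a gap $\omega(i_0)-\omega(j_0)$ that grows linearly in $\ell(\omega)$, which is the same two-chain configuration the paper extracts from a pair $\alpha<\beta$ with $\lfloor|\omega_\beta-\omega_\alpha|/n\rfloor$ large.

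However, the step you defer --- ``the merge of two parallel increasing chains with a large enough gap contains every $321$-avoiding $p\in S_m$'' --- is not bookkeeping; it is the entire technical content of the paper's Proposition 5, and your sketch of it (embed the increasing pieces $X$ into $A$ and $Y$ into $B$ using the ``height flexibility'' of $B$-points) does not yet resolve the real tension. Concretely, a $B$-point sits above an $A$-point only when its period index exceeds that of the $A$-point by roughly the gap $M$, so for a pattern entry of $Y$ that must lie positionally between two chosen $A$-points but in value above a much earlier $A$-point, you need simultaneously $t_k-t_{i+1}\lesssim M$ and $t_{k+1}-t_i\gtrsim M$ for the chosen period indices $t_1<t_2<\cdots$ of the $A$-placements; if you space the $A$-placements uniformly these constraints contradict each other as soon as the value interval and the position interval of some $Y$-entry are separated by two or more left-to-right maxima. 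The paper resolves exactly this by decomposing $p$ into its left-to-right maxima $a$ and the complementary increasing word $b$, placing the $a$-entries with geometrically shrinking spacings $m^{\ell},m^{\ell-1},\dots$ in an $\ell$-step iterative algorithm (which is where the bound $m^{\ell+1}$ on the required gap comes from), and handling decomposable patterns separately by induction (find the prefix and suffix patterns independently and translate the suffix occurrence by a multiple of $n$). To complete your proof you must either reproduce an argument of this kind or supply some other explicit rule for interleaving $X$ and $Y$ into $A$ and $B$ with a verifiable threshold $M(m)$; as written, the key lemma is asserted rather than proved.
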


It is worth noting that 321-avoiding permutations and 321-avoiding affine permutations
appear as an interesting class of permutations in their own right.
In \cite[Theorem 2.1]{BJS} it was shown that a permutation is fully commutative if and only if it is 321-avoiding.
This means that every reduced expression may be obtained from any other reduced expression using only relations of the form $s_is_j=s_js_i$.
Moreover, a proof that this result can be extended to affine permutations as well appears in \cite[Theorem 2.7]{Affine321}.
For a detailed discussion of fully commutative elements in other Coxeter groups, see \cite{Stembridge}.

Even in the case where there might be infinitely many $\omega\in\wt{S}_n$ that avoid a pattern $p$,
we can always construct the following generating function.
Let \begin{equation}g^p_{m,n}=\#\left\{\omega\in\wt{S}_n:\omega\textrm{ avoids }p\textrm{ and }\ell(\omega)=m\right\}.\label{eqn:genfuncoeff2}\end{equation}
Then set \begin{equation}g^p(x,y)=\sum_{n=2}^\infty\sum_{m=0}^\infty g^p_{m,n}x^my^n.\label{eqn:genfun2}\end{equation}
Since there are only finitely many elements in $\wt{S}_n$ of a given length, we always have $g^p_{m,n}<\infty$.
The generating function $g^{321}(x,y)$ is computed in \cite[Theorem 3.2]{JonesHanusa}.

The outline of this paper is as follows.
In Section \ref{sec:back} we will review the definition of the affine symmetric group and list several of its useful properties.
In Section \ref{sec:mainproof} we will prove Theorem \ref{mainthm},
which will follow immediately from combining Propositions \ref{if} and \ref{onlyif}.
In Section \ref{sec:calcs} we will compute $f^p(t)$ for all of the patterns in $S_3$.
Finally, in Section \ref{sec:calcs2} we will give some basic results and conjectures for $f^p(t)$ for the patterns in $S_4$.

\section{Background}
\label{sec:back}
Let $\wt{S}_n$ denote of the set of all bijections $\omega:\Z\to\Z$ with
$\omega(i+n)=\omega(i)+n$ for all $i\in\Z$ and
\begin{equation}\sum_{i=1}^n\omega(i)=\binom{n+1}{2}.\label{eqn:affinesum}\end{equation}
$\wt{S}_n$ is called the \emph{affine symmetric group}, and the elements of $\wt{S}_n$ are called \emph{affine permutations}.
This definition of affine permutations first appeared in \cite[\S3.6]{Lusztig83} and was then developed in \cite{Shi}.
Note that $\wt{S}_n$ also occurs as the affine Weyl group of type $A_{n-1}$.

We can view an affine permutation in its one-line notation as the infinite string of integers
\begin{displaymath}\cdots\omega_{-1}\omega_0\omega_1\cdots\omega_n\omega_{n+1}\cdots,\end{displaymath}
where, for simplicity of notation, we write $\omega_i=\omega(i)$.
An affine permutation is completely determined by its action on $[n]:=\{1,\dots,n\}$.
Thus we only need to record the base window $[\omega_1,\dots,\omega_n]$ to capture all of the information about $\omega$.
Sometimes however, it will be useful to write down a larger section of the one-line notation.

Given $i\not\equiv j\mod{n}$, let $t_{ij}$ denote the affine transposition that interchanges $i+mn$ and $j+mn$ for all $m\in\Z$
and leaves all $k$ not congruent to $i$ or $j$ fixed.
Since $t_{ij}=t_{i+n,j+n}$ in $\wt{S}_n$, it suffices to assume $1\le i\le n$ and $i<j$.
Note that if we restrict to the affine permutations with $\{\omega_1,\dots,\omega_n\}=[n]$,
then we get a subgroup of $\wt{S}_n$ isomorphic to $S_n$, the group of permutations of $[n]$.
Hence if $1\le i<j\le n$, the above notion of transposition is the same as for the symmetric group.

Given a permutation $p\in S_k$ and an affine permutation $\omega\in\wt{S}_n$,
we say that \emph{$\omega$ avoids the pattern $p$} if there is no subsequence of integers $i_1<\cdots<i_k$ such that
the subword $\omega_{i_1}\cdots\omega_{i_k}$ of $\omega$ has the same relative order as the elements of $p$.
Otherwise, we say that $\omega$ \emph{contains} $p$.
For example, if $\omega=[8,1,3,5,4,0]\in\wt{S}_6$, then 8,1,5,0 is an occurrence of the pattern 4231 in $\omega$.
However, $\omega$ avoids the pattern 3412.
A pattern can also come from terms outside of the base window $[\omega_1,\dots,\omega_n]$.
In the previous example, $\omega$ also has 2,8,6 as an occurrence of the pattern 132.
Choosing a subword $\omega_{i_1}\cdots\omega_{i_k}$ with the same relative order as $p$ will be referred to as \emph{placing} $p$ in $\omega$.

\subsection{Coxeter Groups}
\label{sec:CoxGroups}
For a general reference on the basics of Coxeter groups, see \cite{BB:05} or \cite{HumphreysCoxGrp}.
Let $S=\{s_1,\dots,s_n\}$ be a finite set, and let $F$ denote the free group consisting of all words of finite length whose letters come from $S$.
Here the group operation is concatenation of words, so that the empty word is the identity element.
Let $M=\left(m_{ij}\right)_{i,j=1}^n$ be any symmetric $n\times n$ matrix whose entries come from $\Z_{>0}\cup\{\infty\}$ with 1's on the diagonal.
Then let $N$ be the normal subgroup of $F$ generated by the relations
\begin{displaymath}R=\left\{(s_is_j)^{m_{ij}}=1\right\}_{i,j=1}^n.\end{displaymath}
If $m_{ij}=\infty$, then there is no relationship between $s_i$ and $s_j$.
The Coxeter group corresponding to $S$ and $M$ is the quotient group $W=F/N$.

Any $w\in W$ can be written as a product of elements from $S$ in infinitely many ways.
Every such word will be called an \emph{expression} for $w$.
Any expression of minimal length will be called a \emph{reduced expression}, and the number of letters in such an
expression will be denoted $\ell(w)$, the \emph{length} of $w$.
Call any element of $S$ a \emph{simple reflection} and any element conjugate to a simple reflection, a \emph{reflection}.

We graphically encode the relations in a Coxeter group via its \emph{Coxeter graph}.
This is the labeled graph whose vertices are the elements of $S$.
We place an edge between two vertices $s_i$ and $s_j$ if $m_{ij}>2$ and we label the edge $m_{ij}$ whenever $m_{ij}>3$.
The Coxeter graphs of all the finite Coxeter groups have already been classified.
See, for example, \cite[\S2]{HumphreysCoxGrp}.

In \cite[\S8.3]{BB:05} it was shown that $\wt{S}_n$ is the Coxeter group with generating set $S=\{s_0,s_1,\dots,s_{n-1}\}$, and relations
\begin{displaymath}
R=\begin{cases}s_i^2=1,\\\left(s_is_j\right)^2=1,&\textrm{if }|i-j|\ge2,\\\left(s_is_{i+1}\right)^3=1,&\textrm{for }0\le i\le n-1,\end{cases}
\end{displaymath}
where all of the subscripts are taken mod $n$.
Thus the Coxeter graph for $\wt{S}_n$ is an $n$-cycle, where every edge is unlabeled.

\begin{figure}[h]
\begin{center}
\begin{tikzpicture}[style=thick]
\draw[fill=black] {
(4,2) circle (2pt) -- (1,0)
(1,0) circle (2pt) -- (3,0)
(3,0) circle (2pt)
(5,0) circle (2pt) -- (7,0)
(7,0) circle (2pt) -- (4,2)};
\draw {
(4,2) node[above] {$s_0$}
(1,0) node[below] {$s_1$}
(3,0) node[below] {$s_2$}
(4,0) node {$\cdots$}
(5,0) node[below] {$s_{n-2}$}
(7,0) node[below] {$s_{n-1}$}};
\end{tikzpicture}
\caption{Coxeter graph for $\wt{S}_n$.}
\end{center}
\end{figure}
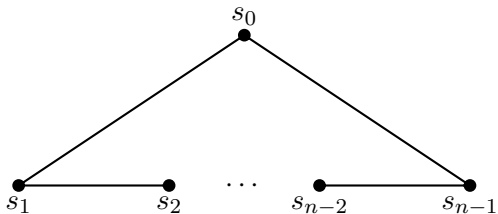

If $J\subsetneq S$ is a proper subset of $S$, then we call the subgroup of $W$ generated by just the elements of $J$ a \emph{parabolic subgroup}.
Denote this subgroup by $W_J$.
In the case of the affine symmetric group we have the following characterization of parabolic subgroups.
\begin{prop}
\label{parabolic}
Let $J=S\backslash\{s_i\}$.
Then $\omega\in\wt{S}_n$ is in the parabolic subgroup $(\wt{S}_n)_J$ if and only if there exists some integer $i\le j\le i+n-1$ such that
$\omega_j\le\omega_k<\omega_j+n$ for all $i\le k\le i+n-1$.
\end{prop}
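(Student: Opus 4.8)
The plan is to describe the parabolic subgroup $(\wt{S}_n)_J$ for $J=S\setminus\{s_i\}$ completely explicitly, and then rephrase that description as the inequality in the statement. Deleting one vertex from the $n$-cycle Coxeter graph of $\wt{S}_n$ leaves a path, i.e.\ a Coxeter graph of type $A_{n-1}$, so $(\wt{S}_n)_J\cong S_n$; concretely, the generators in $J$ are the affine adjacent transpositions $t_{i,i+1},t_{i+1,i+2},\dots,t_{i+n-2,i+n-1}$, each of which only swaps two consecutive positions inside the window $W=\{i,i+1,\dots,i+n-1\}$ and its $n\Z$-translates. I claim that $(\wt{S}_n)_J$ is exactly the setwise stabilizer of $W$, that is, $\omega\in(\wt{S}_n)_J$ if and only if $\{\omega_i,\dots,\omega_{i+n-1}\}=\{i,\dots,i+n-1\}$.

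One inclusion is immediate: each generator in $J$ swaps two elements of $W$ and fixes all the others, hence maps $W$ to $W$, so every product of such generators does too; thus every element of $(\wt{S}_n)_J$ stabilizes $W$. For the converse, suppose $\omega$ stabilizes $W$. Then $\omega|_W$ is a permutation of the $n$-element set $W$, so, since a finite symmetric group is generated by its adjacent transpositions, $\omega|_W$ is a product of the transpositions $(i,i+1),(i+1,i+2),\dots,(i+n-2,i+n-1)$ of $W$; extended periodically these are precisely the elements of $J$. Since an affine permutation is determined by its restriction to any window of $n$ consecutive positions, $\omega$ equals this same product of elements of $J$, so $\omega\in(\wt{S}_n)_J$.

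It remains to check that ``$\omega$ stabilizes $W$'' is equivalent to the stated condition. If $\omega$ stabilizes $W$, put $j=\omega^{-1}(i)$; then $j\in W$ (because $\omega^{-1}$ also stabilizes $W$) and $\omega_j=i$, so for every $k$ with $i\le k\le i+n-1$ we have $\omega_k\in W=[\omega_j,\omega_j+n-1]$, i.e.\ $\omega_j\le\omega_k<\omega_j+n$. Conversely, if such a $j$ exists, then the $n$ pairwise distinct integers $\omega_i,\dots,\omega_{i+n-1}$ all lie in the $n$-element interval $[\omega_j,\omega_j+n-1]$, forcing $\{\omega_i,\dots,\omega_{i+n-1}\}=\{\omega_j,\omega_j+1,\dots,\omega_j+n-1\}$. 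Comparing the sums of the two sides, and using that $\sum_{k=i}^{i+n-1}\omega_k=\binom{n}{2}+in$ — which follows from the defining relation $\sum_{k=1}^n\omega_k=\binom{n+1}{2}$ together with $\omega_{k+n}=\omega_k+n$ — against $\sum_{k=\omega_j}^{\omega_j+n-1}k=\binom{n}{2}+n\omega_j$, we obtain $\omega_j=i$; hence $\{\omega_i,\dots,\omega_{i+n-1}\}=\{i,\dots,i+n-1\}$ and $\omega$ stabilizes $W$.

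The pigeonhole-and-arithmetic step in the last paragraph is routine; the crux is the explicit description of $(\wt{S}_n)_J$, and in particular its harder inclusion, which rests on the two standard facts invoked above: that $S_n$ is generated by its adjacent transpositions, and that an affine permutation is pinned down by any length-$n$ window of its one-line notation.
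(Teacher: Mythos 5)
Your proposal is correct in substance, but it takes a genuinely different route from the paper: the paper's entire proof is a citation of \cite[Proposition 8.3.4]{BB:05}, which already asserts $(\wt{S}_n)_J=\mathrm{Stab}([i,i+n-1])$, followed by the remark that the displayed inequality is just a restatement of membership in that stabilizer. You instead prove the stabilizer identification from scratch (the generators of $J$ are the adjacent affine transpositions inside the window, they visibly preserve the window, and conversely any window-stabilizing $\omega$ is recovered from $\omega|_W$ as a product of those generators because an affine permutation is determined by any $n$ consecutive values), and you also make explicit the small step the paper leaves implicit: the stated condition does not literally say $\omega_j=i$, and your pigeonhole-plus-sum argument using \eqref{eqn:affinesum} is exactly what is needed to see that the inequality forces the window's value set to be $\{i,\dots,i+n-1\}$. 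So your version is self-contained and actually more complete at that last step, at the cost of redoing a standard fact the paper outsources. One caveat on indexing: under the usual convention $s_k=t_{k,k+1}$, the generators of $J=S\backslash\{s_i\}$ are $t_{i+1,i+2},\dots,t_{i+n-1,i+n}$, so the stabilized window would be $[i+1,i+n]$ rather than $[i,i+n-1]$; your list $t_{i,i+1},\dots,t_{i+n-2,i+n-1}$ is what makes the window $[i,i+n-1]$ of the proposition come out, and it is consistent with the paper's own statement of the result, but you should flag which convention for $s_i$ you are using, since the off-by-one is a bookkeeping choice shared with the printed statement rather than a consequence of your argument.
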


\begin{proof}
In \cite[Proposition 8.3.4]{BB:05} it is shown that  $(\wt{S}_n)_J=\mathrm{Stab}([i,i+n-1])$.
The result then follows from the definition of the stabilizing set.
\end{proof}

\subsection{Length Function for $\wt{S}_n$}
\label{sec:length}
For $\omega\in\wt{S}_n$, let $\ell(\omega)$ denote the length of $\omega$ when $\wt{S}_n$ is viewed as a Coxeter group.
Recall that for a non-affine permutation $\pi\in S_n$ we can define an \emph{inversion} as a pair $(i,j)$ such that $i<j$ and $\pi_i>\pi_j$.
For an affine permutation, if $\omega_i>\omega_j$ for some $i<j$, then we also have $\omega_{i+kn}>\omega_{j+kn}$ for all $k\in\Z$.
Hence any affine permutation with a single inversion has infinitely many inversions.
Thus we standardize each inversion as follows.
Define an \emph{affine inversion} as a pair $(i,j)$ such that $1\le i \le n$, $i<j$, and $\omega_i>\omega_j$.
If we let $\affinv(\omega)$ denote the set of all affine inversions in $\omega$, then $\ell(\omega)=\#\affinv(\omega)$, \cite[Proposition 8.3.1]{BB:05}.

We also have the following characterization of the length of an affine permutation, which will be useful later.
\begin{thm}\textup{\cite[Lemma 4.2.2]{Shi}}
\label{length}
Let $\omega\in\wt{S}_n$.
Then \begin{equation}\ell(\omega)=\sum_{1\le i<j\le n}\left|\left\lfloor\frac{\omega_j-\omega_i}{n}\right\rfloor\right|
=\mathrm{inv}(\omega_1,\dots,\omega_n)+\sum_{1\le i<j\le n}\left\lfloor\frac{|\omega_j-\omega_i|}{n}\right\rfloor,\label{eqn:length}\end{equation}
where $\mathrm{inv}(\omega_1,\dots,\omega_n)=\#\{1\le i<j\le n:\omega_i>\omega_j\}$.
\end{thm}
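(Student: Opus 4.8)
The plan is to prove the identity by directly counting affine inversions. Recall from the discussion above that $\ell(\omega)=\#\affinv(\omega)$, where $\affinv(\omega)$ is the set of pairs $(a,b)$ with $1\le a\le n$, $a<b$, and $\omega_a>\omega_b$ \textup{(}\cite[Proposition 8.3.1]{BB:05}\textup{)}. The first thing I would record is an elementary observation that makes all of the arithmetic below clean: the entries $\omega_1,\dots,\omega_n$ lie in pairwise distinct residue classes modulo $n$. Indeed, if $\omega_a\equiv\omega_b\pmod n$ with $1\le a,b\le n$, then $\omega_b=\omega_a+kn=\omega(a+kn)$ for some $k\in\Z$, and injectivity of $\omega$ forces $b=a+kn$, hence $a=b$. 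Consequently $n\nmid\omega_i-\omega_j$ whenever $i\ne j$, so every quotient $(\omega_j-\omega_i)/n$ that appears is a non-integer, which is exactly what lets one pass freely between floors and ceilings with no boundary cases.

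Next I would partition $\affinv(\omega)$ according to the unordered pair $\{i,j\}$ with $1\le i<j\le n$ of residues (chosen in $[n]$) of the two indices of an affine inversion. Fixing such a pair, an affine inversion $(a,b)$ with these residues must have $a\in\{i,j\}$ because $1\le a\le n$, so it lies in one of two families: family (A), $a=i$ and $b=j+kn$ with $k\ge 0$, or family (B), $a=j$ and $b=i+kn$ with $k\ge 1$ — the ranges of $k$ being forced by $a<b$ together with $0<j-i<n$. Using $\omega_{j+kn}=\omega_j+kn$, the inversion condition $\omega_a>\omega_b$ becomes $k<(\omega_i-\omega_j)/n$ in family (A) and $k<(\omega_j-\omega_i)/n$ in family (B). Counting integers $k$ in the allowed ranges (and invoking non-integrality of the quotients to turn "$k\ge 0,\ k<x$" into $\lfloor x\rfloor+1$ and "$k\ge 1,\ k<x$" into $\lfloor x\rfloor$), family (A) contributes $\lfloor(\omega_i-\omega_j)/n\rfloor+1$ if $\omega_i>\omega_j$ and $0$ otherwise, while family (B) contributes $\lfloor(\omega_j-\omega_i)/n\rfloor$ if $\omega_j>\omega_i$ and $0$ otherwise. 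In either of the mutually exclusive cases $\omega_i>\omega_j$ or $\omega_i<\omega_j$, exactly one family contributes, and a one-line check identifies the total for the pair $\{i,j\}$ with $\left|\lfloor(\omega_j-\omega_i)/n\rfloor\right|$; summing over all pairs yields the first equality in \eqref{eqn:length}.

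The second equality is then a purely arithmetic identity applied termwise: for a non-integer real $x$ one has $\left|\lfloor x\rfloor\right|=\lfloor|x|\rfloor$ when $x>0$ and $\left|\lfloor x\rfloor\right|=\lfloor|x|\rfloor+1$ when $x<0$, since in the latter case $-\lfloor x\rfloor=\lceil-x\rceil=\lceil|x|\rceil=\lfloor|x|\rfloor+1$. Taking $x=(\omega_j-\omega_i)/n$, the extra $+1$ occurs precisely for the pairs with $\omega_i>\omega_j$, i.e.\ those contributing to $\mathrm{inv}(\omega_1,\dots,\omega_n)$, so the sum splits as claimed. I expect the only genuinely fussy point to be the bookkeeping in the middle step: pinning down the two ranges of $k$ and verifying that families (A) and (B) cover every affine inversion exactly once — including the ordinary within-window inversions, which sit in family (A) with $k=0$ — but the distinctness of the $\omega_i$ modulo $n$ keeps every count off a boundary so that the floor formulas come out uniformly.
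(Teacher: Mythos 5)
Your argument is correct, and every step checks out: the residues of $\omega_1,\dots,\omega_n$ modulo $n$ are indeed distinct (which also shows a pair $(a,b)$ with $a\equiv b\pmod n$ is never an affine inversion, so your partition by unordered residue pairs does exhaust $\affinv(\omega)$); the ranges $k\ge 0$ for family (A) and $k\ge 1$ for family (B) are forced by $0<j-i<n$; and the resulting counts $\lfloor(\omega_i-\omega_j)/n\rfloor+1$ versus $\lfloor(\omega_j-\omega_i)/n\rfloor$ match $\left|\lfloor(\omega_j-\omega_i)/n\rfloor\right|$ in the two sign cases, with the termwise identity $\left|\lfloor x\rfloor\right|=\lfloor|x|\rfloor+[x<0]$ for non-integral $x$ giving the second equality. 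The one point of comparison to flag is that the paper does not prove this statement at all: it is quoted as \cite[Lemma 4.2.2]{Shi}, so there is no internal proof to measure yours against. What you have written is a self-contained derivation from the characterization $\ell(\omega)=\#\affinv(\omega)$ of \cite[Proposition 8.3.1]{BB:05}, which is the standard route and is consistent with the framework the paper sets up in Section \ref{sec:length}; it would serve as a legitimate replacement for the external citation.
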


For $1\le i\le n$ define $\Inv_i(\omega)=\#\{j\in\N:i<j,\omega_i>\omega_j\}$.
Now let $\Inv(\omega)=(\Inv_1(\omega),\dots,\Inv_n(\omega))$, which will be called the \emph{affine inversion table} of $\omega$.
In \cite[Theorem 4.6]{BB:96} it was shown that there is a bijection between $\wt{S}_n$ and elements of $\Z_{\ge0}^n$ containing at least one zero entry.

\section{Proof of Theorem \ref{mainthm}}
\label{sec:mainproof}
We start with the proof of one direction of Theorem \ref{mainthm}.
Proposition \ref{onlyif} will complete the proof.
\begin{prop}
\label{if}
If $p\in S_m$ contains the pattern 321, then there are infinitely many $\omega\in\wt{S}_n$ that avoid $p$.
\end{prop}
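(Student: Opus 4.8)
The plan is to exhibit, for each $n \geq 2$, an infinite family of affine permutations in $\wt{S}_n$ that all avoid $p$. Since $p$ contains $321$, it suffices to produce infinitely many $\omega \in \wt{S}_n$ that avoid $321$ itself: any $\omega$ avoiding $321$ automatically avoids every pattern containing $321$, hence avoids $p$. So the problem reduces to constructing infinitely many $321$-avoiding affine permutations in $\wt{S}_n$, which is purely a statement about $n$ and does not depend on the details of $p$.

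The construction I have in mind uses powers of a single well-chosen element. Consider the element $\omega \in \wt{S}_n$ whose base window is obtained by a single ``cyclic shift'' of the identity window $[1, 2, \dots, n]$ — for instance take $\omega$ with one-line notation given by $\omega_1 = 1 - n$, $\omega_i = i$ for $2 \le i \le n-1$, and $\omega_n = 2n$ (one checks $\sum_{i=1}^n \omega_i = \binom{n+1}{2}$, so this is a legitimate affine permutation). Geometrically $\omega$ corresponds to a translation-type element, and its powers $\omega, \omega^2, \omega^3, \dots$ are pairwise distinct because $\omega$ has infinite order (the affine Weyl group $\wt{S}_n$ is infinite precisely because it contains such translations). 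The key point to verify is that every power $\omega^k$ is $321$-avoiding: in one-line notation $\omega^k$ looks like the identity string with only the entry in position $1$ pushed far down and the entry in position $n$ pushed far up (scaled by $k$), so reading left to right the sequence is ``one small value, then an increasing run, then one large value,'' repeated periodically — and such a string contains no decreasing subsequence of length three. Making this precise is the one genuine computation: write out $\omega^k$ explicitly (or describe it via its affine inversion table, using the discussion in Section~\ref{sec:length}), and check directly that no three positions $i_1 < i_2 < i_3$ give $\omega^k_{i_1} > \omega^k_{i_2} > \omega^k_{i_3}$.

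The main obstacle is therefore twofold and entirely elementary: first, confirming that the chosen $\omega$ (and hence all its powers) genuinely avoids $321$ — including patterns straddling several windows, not just within the base window — and second, confirming that the powers $\omega^k$ are genuinely distinct elements of $\wt{S}_n$, i.e. that $\omega$ has infinite order. The second is immediate since the displacement of position $1$ grows linearly in $k$, so the windows of $\omega^k$ are all different. For the first, the periodic structure of the one-line notation means it suffices to examine a single period together with its neighbors: any length-three decreasing subsequence would have to fit within a bounded span of positions, and within such a span the string of values of $\omega^k$ is, after the small initial entry, strictly increasing up to the large terminal entry, so at most two of its values can be placed in decreasing order. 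This yields infinitely many $321$-avoiding — hence $p$-avoiding — elements of $\wt{S}_n$, completing the proof.
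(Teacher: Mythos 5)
Your overall strategy is the same as the paper's: reduce to the pattern $321$ (any $\omega$ avoiding $321$ avoids every $p$ containing $321$) and then exhibit infinitely many $321$-avoiding elements of $\wt{S}_n$. The reduction is correct, but your specific family fails. Take your base element $\omega=[1-n,2,3,\dots,n-1,2n]$; its $k$-th power has base window $[1-kn,2,3,\dots,n-1,n+kn]$. For $n\ge3$ and $k\ge1$ this element \emph{contains} $321$: using $\omega^k_{i+n}=\omega^k_i+n$, the positions $n<n+2<1+(k+1)n$ carry the values $n+kn$, $n+2$, and $n+1$, which are decreasing since $n+kn\ge2n>n+2>n+1$. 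Concretely, for $n=4$, $k=1$ the string is $\dots,-3,2,3,8,1,6,7,12,5,10,11,16,\dots$ and $8>6>5$ is an occurrence of $321$. So when $p=321$ itself (the case the whole proposition hinges on), none of your elements avoids $p$, and the argument collapses for every $n\ge3$.

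The flaw is in the claim that a periodic string of the form ``one small value, an increasing run, one large value'' cannot contain a decreasing triple: the large value of one period, a run value of the next period, and the small value of a later period can form exactly such a triple, because you hold the middle run fixed while pushing only positions $1$ and $n$ apart. The paper's construction fixes precisely this: its elements have base window $[1-k,2-k,\dots,n-1-k,\,n+k(n-1)]$, so the small entry and the run slide down \emph{together}, and the non-final entries of consecutive windows chain into a single increasing sequence (the last run entry $n-1-k+mn$ is smaller than the first entry $1-k+(m+1)n$ of the next window). The full one-line notation is then a shuffle of exactly two increasing sequences, which avoids $321$ by pigeonhole; your string is a shuffle of three increasing sequences, which is not sufficient. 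Your auxiliary claims (that the powers are pairwise distinct, and that $321$-avoidance passes to $p$-avoidance) are fine, but the construction itself must be replaced, e.g.\ by the paper's family or any other family whose one-line notation is a union of two increasing sequences.
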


\begin{proof}
For $k\in\N$, let $\omega^{(k)}\in\wt{S}_n$ be the affine permutation whose reduced expression, when read right to left, is obtained as follows.
Starting at $s_0$, proceed clockwise around the Coxeter diagram $k(n-1)$ steps, appending each vertex as you go.
The base window of the one-line notation of these elements has the form \begin{displaymath}\omega^{(k)}=[1-k,2-k,\dots,n-1-k,n+k(n-1)].\end{displaymath}
Note these elements correspond with the spiral varieties in the affine Grassmannian from \cite{BilleyMitchell}.

As an example, in $\wt{S}_4$ we have the following:
\begin{displaymath}\begin{array}{rcccl}
s_2s_1s_0&=&\omega^{(1)}&=&[0,1,2,7]\\
s_1s_0s_3s_2s_1s_0&=&\omega^{(2)}&=&[-1,0,1,10]\\
s_0s_3s_2s_1s_0s_3s_2s_1s_0&=&\omega^{(3)}&=&[-2,-1,0,13].
\end{array}\end{displaymath}

The infinite string in the one-line notation of $\omega^{(k)}$ is a shuffle of two increasing sequences.
Hence every $\omega^{(k)}$ avoids the pattern 321.
Thus there are infinitely many permutations in $\wt{S}_n$ avoiding the pattern 321, and hence avoiding any pattern $p$ containing 321.
\end{proof}

Call a permutation $p\in S_m$ \emph{decomposable} if $p$ is contained in a proper parabolic subgroup of $S_m$.
In other words, there exists some $1\le j\le m-1$ such that $\{p_1,\dots,p_j\}=[j]$.
Then we have the following lemma.

\begin{lemma}
\label{decomplemma}
Let $p\in S_m$ be decomposable.
If $p$ avoids the pattern 321, then there exists some constant $L$ such that if $\ell(\omega)>L$, then $\omega$ contains the pattern $p$.
Hence there are only finitely many $\omega\in\wt{S}_n$ that avoid $p$.
\end{lemma}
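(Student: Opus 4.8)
The key idea is that a decomposable 321-avoiding pattern $p$ must in fact be a direct sum of patterns, each of which is very simple, and I want to leverage this structure together with the "spiral" elements $\omega^{(k)}$ from Proposition \ref{if} — except now run the other way: I want to show long affine permutations \emph{cannot} avoid such a $p$. Write $p = q \oplus r$ with $q \in S_j$, $r \in S_{m-j}$, so $\{p_1,\dots,p_j\} = [j]$; since $p$ avoids $321$, both $q$ and $r$ avoid $321$. I would argue by induction on $m$, the base case $m = 1$ being trivial (every $\omega \in \wt{S}_n$ contains the pattern $1$, so take $L = -1$, say). For the inductive step, the crucial observation is that a long affine permutation, by the length formula in Theorem \ref{length}, must have some $\lfloor |\omega_j - \omega_i|/n \rfloor$ large, i.e. it must take values spread out over many residue windows; I want to extract from this a long "increasing staircase" — a subsequence $\omega_{i_1} < \omega_{i_2} < \cdots$ with $i_1 < i_2 < \cdots$ and with the values jumping by more than $n$ at each step — together with enough room on either side to place $q$ and $r$.

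First I would make precise the dichotomy forced by large length. If $\ell(\omega) > L$ for a suitable $L = L(n,m)$, then by \eqref{eqn:length} either $\mathrm{inv}(\omega_1,\dots,\omega_n)$ is large, or some term $\lfloor |\omega_j - \omega_i|/n\rfloor$ is large. In the first case, a large number of inversions among $\omega_1,\dots,\omega_n$ forces (by Erd\H{o}s--Szekeres, or by the pigeonhole bound $\mathrm{inv} \le \binom{n}{2}$ being violated unless... wait, $\mathrm{inv}$ is bounded by $\binom n 2$) — so actually the inversion term is bounded, and the \emph{only} way to make $\ell(\omega)$ large is to make $\sum \lfloor |\omega_j - \omega_i|/n \rfloor$ large, hence some single $\lfloor |\omega_j - \omega_i|/n\rfloor \ge L/\binom n 2$. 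This is the key simplification: largeness of length is \emph{entirely} about values being spread across many windows. From a pair $(i,j)$ with $\omega_j - \omega_i$ (say) very large and positive, I can find, by choosing intermediate indices $i < k_1 < k_2 < \cdots < k_N < j$ inside one period and then translating by multiples of $n$, a long chain $a_1 < a_2 < \cdots$ of positions whose values increase and in fact land in distinct, far-apart residue-classes-worth of windows — concretely, a subsequence on which $\omega$ restricted looks like the one-line notation of a spiral element $\omega^{(k)}$ scaled up.

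Next, the combinatorial heart: given such a long increasing "widely spaced" subsequence, I want to place $p = q \oplus r$. Here I use decomposability essentially. Because the chain is widely spaced (consecutive values differ by more than $n$, say even much more), I can find disjoint index-intervals $B_1 < B_2$, far apart in position, such that \emph{all} of $\omega$'s values on $B_1$ are below \emph{all} of $\omega$'s values on $B_2$ — this is exactly what a big jump in the staircase buys me. Then it suffices to place $q$ inside the window $B_1$ and $r$ inside $B_2$. Each of $q, r$ is a $321$-avoiding permutation in a smaller symmetric group; if $q$ (resp. $r$) is itself decomposable I recurse, and if it is \emph{in}decomposable and $321$-avoiding then — and this is a small separate lemma I'd isolate — an indecomposable $321$-avoiding permutation is just $1$ or has a very restricted form (in fact the indecomposable $321$-avoiders in $S_k$ are exactly... the ones with $p_1 = k$ or $p_k = 1$ patterns, handled directly), so I can embed it directly inside a moderately long stretch of a widely spaced staircase. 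Iterating, every piece gets placed, and the disjointness of the position-intervals plus the value-separation guarantees the pieces assemble into an occurrence of $p$. The bound $L$ coming out is explicit in $n$ and $m$ (roughly $\binom n 2$ times an exponential in $m$), and since $\wt{S}_n$ has only finitely many elements of each length, "$\ell(\omega) > L$ implies $\omega$ contains $p$" immediately gives that only finitely many $\omega \in \wt{S}_n$ avoid $p$.

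**The main obstacle.** The delicate point is the bookkeeping in the recursive placement: I must guarantee that when I split a widely spaced staircase to place $q \oplus r$, the sub-staircases handed to $q$ and to $r$ are \emph{still} long enough and \emph{still} widely spaced enough to support their own recursive splittings — so the length threshold $L$ has to be defined by a recursion on $m$ (something like $L_m = C(n)\cdot L_{m-1}$ or $L_m = 2 L_{m-1} + C(n)$) and I need to check the staircase-length I can always extract from $\ell(\omega) > L_m$ indeed exceeds what the two recursive calls demand. Getting the indecomposable base case exactly right — classifying or at least adequately describing indecomposable $321$-avoiding permutations and checking each embeds in a short widely spaced increasing run — is the other place where care is needed, though I expect it to be routine once stated.
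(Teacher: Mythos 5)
Your overall architecture --- write $p=q\oplus r$, obtain occurrences of $q$ and of $r$, and use periodicity to shift the occurrence of $r$ right by a multiple of $n$ so that it sits entirely to the right of, and entirely above, the occurrence of $q$ --- is exactly the paper's argument, and your $B_1,B_2$ construction is a correct rephrasing of that shift; the finiteness conclusion from a length bound is also right. The genuine gap is in how you propose to \emph{produce} the occurrences of $q$ and $r$ in the first place. The paper simply invokes the induction hypothesis for the (possibly indecomposable) components $q\in S_j$ and $r\in S_{m-j}$; this is really a joint induction with Proposition \ref{onlyif}, whose algorithm is what handles indecomposable $321$-avoiding patterns. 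You instead try to be self-contained: from a large value of $\bigl\lfloor|\omega_\beta-\omega_\alpha|/n\bigr\rfloor$ you extract a single increasing, widely spaced chain $\omega_{i_1}<\omega_{i_2}<\cdots$ and claim each indecomposable $321$-avoiding piece embeds ``directly inside a moderately long stretch'' of it. This cannot work: an increasing subsequence contains only occurrences of identity patterns, and the only indecomposable pattern occurring in an increasing sequence is the singleton $1$. For instance $q=231$ (or $3142$) is indecomposable and $321$-avoiding but has a descent, so no stretch of your staircase contains it. Your parenthetical classification of indecomposable $321$-avoiders as those with $p_1=k$ or $p_k=1$ is also false; $3142$ is a counterexample.

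What is actually needed --- and what Proposition \ref{onlyif} supplies --- is the richer structure given by \emph{both} families of translates $\omega_{\alpha+kn}$ and $\omega_{\beta+kn}$ (with $\omega_\alpha>\omega_\beta$, $\alpha<\beta$): two interleaved increasing sequences, one lying above the other, which matches the shape of an arbitrary $321$-avoiding pattern (left-to-right maxima versus the rest). If you replace your single staircase by this two-row structure and do the placement bookkeeping, you are essentially reproving Proposition \ref{onlyif}; if instead you take as induction hypothesis the length-bound statement for \emph{all} $321$-avoiding patterns of size $<m$ (decomposable or not), your argument collapses to the paper's short proof. Either repair works, but as written the indecomposable base case of your recursion is broken.
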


\begin{proof}
Our proof will use induction on $m$.
If $m=1$, the result is clear, since no affine permutations can avoid $p$.
So suppose that $m>1$ and that for any $k<m$ and $q\in S_k$
there exists some constant $L_q$ such that if $\ell(\omega)>L_q$, then $\omega$ contains $q$.

Since $p$ is decomposable, there exists an index $j$ such that $\{p_1,\dots,p_j\}=[j]$.
Note in this case, we also have $\{p_{j+1},\dots,p_m\}=\{j+1,\dots,m\}$.
So we can view $q=p_1\cdots p_j$ as an element of $S_j$ and $r=p_{j+1}\cdots p_m$ as an element of $S_{m-j}$.
By our induction hypothesis, there exists a constant $L_q$ such that if $\ell(\omega)>L_q$, then there is an occurrence of $q$ in $\omega$.
Also, there exists a constant $L_r$ such that if $\ell(\omega)>L_r$, then there is an occurrence of $r$ in $\omega$.

Now let $L=\max\{L_q,L_r\}$ and suppose $\ell(\omega)>L$.
Then $\omega$ contains both patterns $q$ and $r$.
By the periodic property of $\omega$, we can translate the occurrence of $r$ to the right some multiple of $n$
until it lies entirely to the right of the occurrence of $q$ and the smallest entry of $r$ is bigger than the largest entry of $q$.
This will now give an occurrence of $p$ in $\omega$.
More specifically, let $\omega_{a_1}\cdots\omega_{a_j}$ be the occurrence of $q$ in $\omega$,
and let $\omega_{b_1}\cdots\omega_{b_{m-j}}$ be the occurrence of $r$ in $\omega$.
Then there exists some $k\in\N$ such that $b_1+kn>a_j$ and $\omega_{a_t}<\omega_{b_{s+kn}}$ for every $1\le t\le j$ and $1\le s\le m-j$.
We then have $\omega_{a_1}\cdots\omega_{a_j}\omega_{b_1+kn}\cdots\omega_{b_{m-j}+kn}$ is an occurrence of $p$ in $\omega$.

Hence if $\ell(\omega)>L$, then $\omega$ must contain $p$.
Thus there can only be finitely many $\omega\in\wt{S}_n$ that avoid $p$ since $\#\{\omega\in\wt{S}_n:\ell(\omega)\le L\}$ is finite.
\end{proof}

We now want to prove a similar statement for a general 321-avoiding pattern.
\begin{prop}
\label{onlyif}
If $p\in S_m$ avoids the pattern 321, then there are only finitely many $\omega\in\wt{S}_n$ that avoid $p$.
\end{prop}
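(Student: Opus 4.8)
The plan is to combine Lemma \ref{decomplemma} with a direct argument for indecomposable patterns. By Lemma \ref{decomplemma} every \emph{decomposable} $321$-avoiding pattern is already handled, so it suffices to treat a pattern $p\in S_m$ that is indecomposable (and still $321$-avoiding). Fix such a $p$ and fix $\omega\in\wt{S}_n$; I will show that if $\ell(\omega)$ exceeds a bound depending only on $n$ and $m$, then $\omega$ contains $p$, which gives finiteness since $\{\omega\in\wt{S}_n:\ell(\omega)\le L\}$ is finite.

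The first step is to extract structure from a long $\omega$ via Theorem \ref{length}. Since $\ell(\omega)=\mathrm{inv}(\omega_1,\dots,\omega_n)+\sum_{1\le i<j\le n}\lfloor|\omega_j-\omega_i|/n\rfloor$ and $\mathrm{inv}(\omega_1,\dots,\omega_n)\le\binom n2$, if $\ell(\omega)>\binom n2(K+1)$ then some pair $1\le i<j\le n$ has $|\omega_i-\omega_j|>Kn$. Because $\omega(a+n)=\omega(a)+n$, each residue class of positions $\{r+kn:k\in\Z\}$ carries an arithmetic progression of values $\{\omega_r+kn:k\in\Z\}$ with common difference $n$; call these the \emph{tracks}. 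The large gap says two tracks — a ``high'' one and a ``low'' one — have offsets differing by more than $Kn$. If the gap $\omega_i<\omega_j$ is an ascent, the periodicity forces the descent $\omega_j>\omega_{i+n}=\omega_i+n$ with $j<i+n$, so after relabeling the level index on one track one may always assume we are in the ``descent'' configuration, in which the high track takes precedence in position order when a high-track element and a low-track element sit at the same level; the effective offset difference is still a multiple $C$ of $n$ with $C$ as large as we wish.

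The second step is to embed $p$ into this two-track picture. Write $p=\alpha\sqcup\beta$ as the shuffle of its increasing subsequence of left-to-right maxima ($\alpha$) and the increasing subsequence of the remaining entries ($\beta$) — possible exactly because $p$ avoids $321$. Put $\alpha$ on the high track and $\beta$ on the low track; an element placed on a track is automatically increasing in both position and value, so the only freedom is the choice of level for each element. With chosen (adjusted) levels $a_1<\dots<a_{|\alpha|}$ for $\alpha$ and $b_1<\dots<b_{|\beta|}$ for $\beta$, the induced value order of the placed entries is the merge of $\{a_t\}$ with $\{b_s\}$, while the induced position order is the merge of $\{a_t-C\}$ with $\{b_s\}$ (with the opposite tie convention, coming from the position-precedence fixed above). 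Since $p$ is indecomposable its first position is an $\alpha$-position while its last is a $\beta$-position, and the position of its smallest value is a $\beta$-position while that of its largest value is an $\alpha$-position; these ``crossed'' boundary conditions are what make it possible, for $C$ large, to choose integers $a_t,b_s$ so that the first merge reproduces the value pattern of $p$ (which values, in increasing order, lie at $\alpha$- versus $\beta$-positions) and the second reproduces the position pattern of $p$. (For instance $p=2413$ cannot be realized by two parallel increasing runs obeying the naive ``all of one below all of the other'' order, but it is realizable here once the tie-breaking and the shift by $C$ are used; the $m$-cycle $23\cdots m\,1$, its reverse, and $12\ominus12$ go the same way.) Matching both patterns simultaneously yields an occurrence of $p$ in $\omega$.

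The delicate point — the one I expect to be the main obstacle — is exactly this simultaneous matching: demanding that one system of integers $\{a_t\}$ realize the value pattern of $p$ while its $C$-shift realizes the position pattern imposes a compatibility between the two patterns, and one must verify that indecomposability together with $321$-avoidance guarantees it for every $m$ (my first, hasty attempts suggested an obstruction, which disappears only once the tie-breaking is tracked correctly). I would prove it either by an explicit choice of the $a_t$ and $b_s$ read off from the left-to-right-maxima decomposition of $p$, or, if that proves unwieldy, by a short induction on $m$: delete the entry $p_m$, apply the statement to the resulting $321$-avoiding pattern in $S_{m-1}$, and reinsert the missing entry using the periodicity of $\omega$, the bookkeeping of the level indices being the only real content. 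Combining this with Proposition \ref{if} completes the proof of Theorem \ref{mainthm}.
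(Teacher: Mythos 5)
Your setup matches the paper's: reduce to indecomposable $p$ via Lemma \ref{decomplemma}, use Theorem \ref{length} to find, in any sufficiently long $\omega$, two window positions $\alpha<\beta$ whose values differ by more than $Cn$ for $C$ as large as desired, split $p$ into its increasing sequence of left-to-right maxima and the (also increasing, by 321-avoidance) complementary sequence, and try to place the former on translates of one position and the latter on translates of the other. But the statement you flag as ``the delicate point'' is not a loose end --- it is the entire content of the proposition beyond the easy length-formula reduction, and you do not prove it. You need to show that for \emph{every} indecomposable 321-avoiding $p$ one can choose the levels $a_t,b_s$ so that the plain merge and the $C$-shifted merge simultaneously reproduce the value order and the position order of $p$. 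You assert this follows from the ``crossed'' boundary conditions of indecomposability and from tie-breaking, admit that your first attempts suggested an obstruction, and then offer two unexecuted strategies. That is a genuine gap: the paper's proof consists precisely of an iterative algorithm that does this placement, working left to right through $p$ in $\ell$ stages (one per left-to-right maximum), using indecomposability at each stage to guarantee that some later, smaller $b_t$ exists to the right of the current maximum, and maintaining a quantitative invariant (consecutive placed $a$-elements at least $m^{\ell-r+1}$ translates apart after stage $r$, which is why the hypothesis is $\lfloor|\omega_\beta-\omega_\alpha|/n\rfloor>m^{\ell+1}$) so that there is always room to interleave the intervening $b$-elements.

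Your fallback plan --- induct on $m$ by deleting $p_m$ and reinserting it --- does not obviously repair this. Deleting $p_m$ can make the pattern decomposable, so the inductive hypothesis in the form you state it need not apply; and even when it does, the induction gives only the existence of \emph{some} occurrence of the shorter pattern, with no control over how much room it leaves on the relevant track to accommodate the reinserted entry at the correct level. Exactly this kind of quantitative bookkeeping (the paper's spacing invariant) is what a correct induction or direct construction must carry along, and it is what is missing from your proposal. Fixing the proof means either carrying out the explicit level choice you allude to, or reproducing the paper's stage-by-stage algorithm with its spacing estimates; once that is done, the concluding step (every pair $\alpha<\beta$ has $\lfloor|\omega_\beta-\omega_\alpha|/n\rfloor$ bounded, hence $\ell(\omega)\le\bigl(m^{\ell+1}+2\bigr)\binom{n}{2}$, hence finitely many avoiders) goes through exactly as you and the paper both say.
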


\begin{proof}
Suppose $p$ avoids the pattern 321.
By Lemma \ref{decomplemma} we may also assume $p$ is indecomposable.
Let $a=a_1\cdots a_\ell$ be the subsequence of $p$ consisting of all $p_j$ such that $p_i<p_j$ for all $i<j$.
Here $a$ is just the sequence of left-to-right maxima.
Let $b$ be the subsequence of $p$ consisting of all $p_i$ not in $a$.
By its construction, $a$ must be increasing.
Furthermore, since $p$ avoids the pattern 321, $b$ must also be increasing.
To see this, note that if there is some $p_s,p_t$ in $b$ with $s<t$ and $p_s>p_t$, then there is some $r<s$ with $p_r>p_s$, since $p_s$ is not in $a$.
But then $p_rp_sp_t$ forms a 321 pattern in $p$.

Let $\omega\in\wt{S}_n$ and suppose that for some $1\le\alpha<\beta\le n$, we have
\begin{displaymath}\left\lfloor\frac{|\omega_\beta-\omega_\alpha|}{n}\right\rfloor>m^{\ell+1}+1.\end{displaymath}
If $\omega_\alpha<\omega_\beta$, set $\omega_\alpha^\prime=\omega_\beta$ and $\omega_\beta^\prime=\omega_\alpha+n$.
Then we will have $\omega_\alpha^\prime>\omega_\beta^\prime$ and
\begin{displaymath}\left\lfloor\frac{|\omega_\beta^\prime-\omega_\alpha^\prime|}{n}\right\rfloor>m^{\ell+1}.\end{displaymath}
So in what follows we will assume $\omega_\alpha>\omega_\beta$ and
\begin{equation}\left\lfloor\frac{|\omega_\beta-\omega_\alpha|}{n}\right\rfloor>m^{\ell+1}.\label{eqn:diffbound}\end{equation}

We can now construct the occurrence of $p$ in $\omega$.
Our iterative algorithm will complete in $\ell$ steps, where $\ell$ is the length of the subsequence $a$ described above.
We will be using translates $\omega_{\alpha+kn}$ to place the terms of $p$ in the $a$ sequence
and translates $\omega_{\beta+kn}$ to place the terms of $p$ in the $b$ sequence.

Since $p$ is indecomposable, $a_1\not=1$.
Hence there is some $t$ such that $b_t=a_1-1$.
Suppose $b_t=p_i$.
Let $s$ be the largest index such that $a_s$ lies to the left of $b_t$ in $p$.
Note that $1<s<m$ or else $p$ is decomposable.
Let $y$ be the largest integer such that $\omega_{\beta+yn}<\omega_\alpha$ and let $z=\left\lfloor\tfrac{y}{s}\right\rfloor$.
Since $\omega_\alpha-\omega_\beta>nm^{\ell+1}$, we have $y>m^{\ell+1}$ and hence $z>m^\ell$.
For each $1\le k\le s$, use $\omega_{\alpha+(k-1)zn}$ to place $a_k$ in $\omega$.
Then if $\omega_u$ corresponds to $a_k$ and $\omega_v$ corresponds to $a_{k+1}$, we will have
\begin{equation}\left|\omega_u-\omega_v\right|=\left|u-v\right|=nz>nm^\ell.\label{eqn:spacing1}\end{equation}
Finally, use translates of $\omega_\beta$ to place $b_1,\dots,b_t$ in $\omega$ in such a way that $b_t$ is placed at $\omega_{\beta+yn}$
and for any $1\le x<t$, if $b_x$ lies between $a_k$ and $a_{k+1}$ in $p$,
then $b_x$ is placed at a translate of $\omega_\beta$ between $\omega_{\alpha+(k-1)zn}$ and $\omega_{\alpha+kzn}$.
By \eqref{eqn:spacing1} there are at least $m^\ell$ translates of $\omega_\beta$ in this interval, so there is enough space
to place all of the $b_x$'s that lie between $a_k$ and $a_{k+1}$ using translates of $\omega_\beta$.
Thus after the first iteration we have placed $p_1\cdots p_i$ in $\omega$.

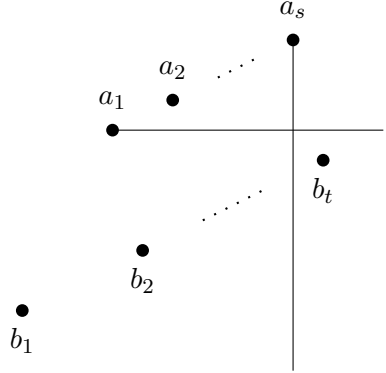
\begin{figure}
\begin{center}
\begin{tikzpicture}
\pgftransformscale{0.4}

\coordinate (a1) at (3,6);
\coordinate (a2) at (5,7);
\coordinate (as) at (9,9);
\coordinate (b1) at (0,0);
\coordinate (b2) at (4,2);
\coordinate (bt) at (10,5);

\foreach \point in {a1,a2,as,b1,b2,bt}
	{\fill [black] (\point) circle (6pt);}
	
\draw (a1) -- +(9,0);
\draw (as) -- +(0,-11);

\draw (3,7) node{$a_1$};
\draw (5,8) node{$a_2$};
\draw (9,10) node{$a_s$};
\draw (0,-1) node{$b_1$};
\draw (4,1) node{$b_2$};
\draw (10,4) node{$b_t$};

\draw[loosely dotted,thick] (6.5,7.75) -- +(1.5,0.75);
\draw[loosely dotted,thick] (6,3) -- +(2,1);

\end{tikzpicture}
\end{center}
\caption{First place all values of $p$ to the left of $b_t$.}
\end{figure}

Now suppose we have placed every term in the $a$ sequence up to $a_r$ for some $1<r<\ell$.
If we have placed $a_r$, then we have also placed some additional terms from the $b$ sequence.
Again, fix $t$ so that $b_t$ is the largest element in $p$ to the right of $a_r$ satisfying $b_t<a_r$.
We may assume such a $b_t$ exists, or else $p$ is decomposable.
If $b_t=p_i$, then we have actually placed $p_1\cdots p_i$.
Moreover, suppose that the terms from the $a$ sequence among $p_1\cdots p_i$ have been placed so that if $\omega_u$ corresponds to $a_k$
and $\omega_v$ corresponds to $a_{k+1}$ for some $1\le k\le r$, then
\begin{equation}\left|\omega_u-\omega_v\right|=\left|u-v\right|>nm^{\ell-r+1}.\end{equation}
Note we must have also already placed $a_{r+1}$, or else $a_{r+1}=p_{i+1}$ and hence $p$ is decomposable.

We will now show how to place all terms in $p$ from the $b$ sequence whose values are between $a_r$ and $a_{r+1}$,
thus completing the $(r+1)^{\text{st}}$ step of our algorithm.
Note that in the process of placing these terms, we will also possibly be placing some additional terms from the $a$ sequence.
Let $\omega_u$ correspond to $a_r$ and $\omega_v$ correspond to $a_{r+1}$.
Then we have at least $m^{\ell-r+1}$ translates of $\omega_\alpha$ and $\omega_\beta$ falling between $\omega_u$ and $\omega_v$.
So if $p_j$ is the largest entry of $p$ to the left of $a_{r+1}$ satisfying $p_j<a_{r+1}$, as in the first step of our algorithm,
we may place $p_{i+1},\dots,p_j$ in such a way that any of the terms corresponding to the subsequence $a$ are placed at least $m^{\ell-r}$ translates apart.

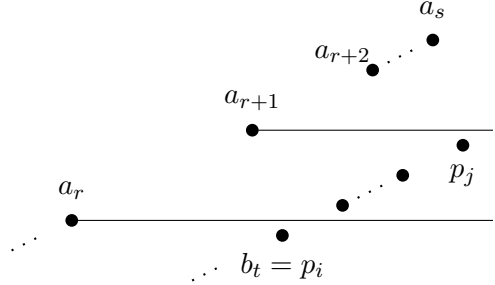
\begin{figure}
\begin{center}
\begin{tikzpicture}
\pgftransformscale{0.4}

\coordinate (ar) at (2,2);
\coordinate (ar1) at (8,5);
\coordinate (ar2) at (12,7);
\coordinate (as) at (14,8);
\coordinate (pi) at (9,1.5);
\coordinate (b1) at (11,2.5);
\coordinate (b2) at (13,3.5);
\coordinate (pj) at (15,4.5);

\foreach \point in {ar,ar1,ar2,as,pi,b1,b2,pj}
	{\fill [black] (\point) circle (6pt);}
	
\draw (ar) -- +(14,0);
\draw (ar1) -- +(8,0);

\draw (2,3) node{$a_r$};
\draw (8,6) node{$a_{r+1}$};
\draw (11,7.5) node{$a_{r+2}$};
\draw (14,9) node{$a_s$};
\draw (9,0.5) node{$b_t=p_i$};
\draw (15,3.5) node{$p_j$};

\draw[loosely dotted,thick] (0,1) -- +(1,0.5);
\draw[loosely dotted,thick] (12.5,7.25) -- +(1,0.5);
\draw[loosely dotted,thick] (6,0) -- +(1,0.5);
\draw[loosely dotted,thick] (11.5,2.75) -- +(1,0.5);

\end{tikzpicture}
\end{center}
\caption{The $(r+1)^\text{st}$ iteration will place all elements of $p$ between $p_{i+1}$ and $p_j$.}
\end{figure}

Iterating this algorithm $\ell$ times will place all of $p$ in $\omega$.
Hence if $\omega$ is to avoid $p$, then we must have
\begin{displaymath}\left\lfloor\frac{|\omega_\beta-\omega_\alpha|}{n}\right\rfloor\le m^{\ell+1}+1\text{ for all }1\le\alpha<\beta\le n.\end{displaymath}
Since $\mathrm{inv}(\omega_1,\dots,\omega_n)\le\binom{n}{2}$, we conclude by \eqref{eqn:length} that
\begin{equation}\ell(\omega)\le\binom{n}{2}+\left(m^{\ell+1}+1\right)\binom{n}{2}=\left(m^{\ell+1}+2\right)\binom{n}{2}.\label{eqn:lengthbound}\end{equation}
For any $k$, the set of all affine permutations in $\wt{S}_n$ of length at most $k$ is finite.
Hence there can be only finitely many elements in $\wt{S}_n$ that avoid $p$.
\end{proof}

Note that in general, the length bound $\ell(\omega)\le(m^{\ell+1}+2)\binom{n}{2}$ is much larger than needed.
For the proof of Theorem \ref{mainthm} though, any upper bound on $\ell(\omega)$ will suffice.
Given a specific pattern $p$, we can tighten the bounds in the above algorithm,
and thus obtain better upper bounds on the maximal length for pattern avoidance.

For example, let $p=3412\in S_4$.
By \eqref{eqn:lengthbound}, if $\omega\in\wt{S}_n$ avoids $p$, then $\ell(\omega)\le66\binom{n}{2}$.
Here the algorithm is completed on the first iteration and we can actually prove a tighter bound $\ell(\omega)\le3\binom{n}{2}$ for this particular pattern.

\section{Generating Functions for Patterns in $S_3$}
\label{sec:calcs}
Let $f^p_n$ and $f^p(t)$ be as in \eqref{eqn:genfuncoeff} and \eqref{eqn:genfun} in Section \ref{sec:in}.
Then by Theorem \ref{mainthm} we have $f^{321}_n=\infty$ for all $n$.
However, for all of the other patterns $p\in S_3$ we can still compute $f^p(t)$.

\begin{thm}
\label{calcs1}
Let $f^p(t)$ be as above.
Then
\begin{align}
f^{123}(t)&=0,\\
f^{132}(t)=f^{213}(t)&=\sum_{n=2}^\infty t^n,\\
f^{231}(t)=f^{312}(t)&=\sum_{n=2}^\infty\binom{2n-1}{n}t^n.
\end{align}
\end{thm}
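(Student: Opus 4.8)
The three formulas are of very different difficulty, and I would treat them in turn.

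\textbf{The pattern $123$:} no $\omega\in\wt S_n$ avoids $123$, so $f^{123}_n=0$ for all $n$ --- by periodicity, $\omega_1<\omega_{1+n}<\omega_{1+2n}$ is an occurrence of $123$ at the increasing positions $1<1+n<1+2n$.

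\textbf{The patterns $132$ and $213$:} I would show that the identity $e$ is the unique element of $\wt S_n$ avoiding $132$ (and likewise for $213$), so $f^{132}_n=f^{213}_n=1$. If $\omega\ne e$ then $\ell(\omega)=\#\affinv(\omega)>0$, so there is a pair $1\le i\le n$, $i<j$ with $\omega_i>\omega_j$; then the translate $\omega_{i-kn}=\omega_i-kn$ with $k$ large gives $\omega_{i-kn}<\omega_j<\omega_i$, an occurrence of $132$ at positions $i-kn<i<j$, while $\omega_{i+kn}=\omega_i+kn$ with $k$ large gives $\omega_i>\omega_j$ and $\omega_{i+kn}>\omega_i$, an occurrence of $213$ at positions $i<j<i+kn$. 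Conversely the one-line notation of $e$ is strictly increasing, so $e$ contains only monotone patterns. (Alternatively $f^{213}=f^{132}$ follows from the reverse--complement involution $\omega\mapsto\omega'$ of $\wt S_n$ given by $\omega'(i)=1-\omega(1-i)$, which carries an occurrence of a pattern $p$ in $\omega'$ to an occurrence of the reverse complement of $p$ in $\omega$; combined with $f^{p}=f^{p^{-1}}$ from $\omega\mapsto\omega^{-1}$, this reduces the theorem to computing $f^{123}$, $f^{132}$ and $f^{231}$, and also gives $f^{231}=f^{312}$.)

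\textbf{The patterns $231$ and $312$:} this is the real content, and I would split it into (1) a combinatorial description of the $231$-avoiders in $\wt S_n$ and (2) their enumeration. For (1) the starting point is the reformulation that $\omega$ avoids $231$ if and only if $\min\{\omega_k:k>j\}>\omega_i$ whenever $i<j$ and $\omega_i<\omega_j$. Proposition~\ref{onlyif} applied to $p=231$ already bounds $\ell(\omega)$, hence bounds the spread of the window $[\omega_1,\dots,\omega_n]$; using this I would aim to rephrase the avoidance condition as one depending only on the window together with its next translate $\omega_1+n,\dots,\omega_n+n$, describing the admissible windows as those that decompose in a controlled (nested, decreasing-run) fashion --- equivalently, to pin down which nonnegative integer vectors occur as the affine inversion tables $\Inv(\omega)$ of the $231$-avoiders. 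For (2), with the admissible windows in hand the count should be a routine lattice-path/ballot computation giving $\binom{2n-1}{n}=\binom{2n-1}{n-1}$; the identity $\binom{2n-1}{n-1}=C_n+\binom{2n-1}{n-2}$ (from $\binom{2n-1}{n-1}-\binom{2n-1}{n-2}=C_n$, the $n$th Catalan number) indicates a natural way to organize the bookkeeping --- genuine $231$-avoiding permutations of $[n]$ embed in $\wt S_n$ as $231$-avoiders and contribute $C_n$, and the remaining, genuinely affine avoiders should contribute $\binom{2n-1}{n-2}$.

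The hardest step is the \emph{sufficiency} direction of the window description in stage (1): necessity is immediate from the reformulation above, but sufficiency requires excluding $231$-patterns whose three positions lie in two or more consecutive copies of the window, and since $\min\{\omega_k:k>j\}$ is not a bounded-window quantity a priori, Proposition~\ref{onlyif} must be invoked first to reduce matters to a finite verification before the enumeration in stage (2) can be carried out.
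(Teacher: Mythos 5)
Your treatment of $123$, $132$, and $213$ is correct and essentially the paper's argument (the paper uses a descent where you use an arbitrary inversion, but the translate trick is the same), and your symmetry reductions ($\omega\mapsto\omega^{-1}$, reverse--complement) giving $f^{231}_n=f^{312}_n$ are fine. The problem is the $231$ count, which is the real content of the theorem: what you give there is a plan, not a proof. The two essential steps are left as ``I would aim to'' and ``should be a routine lattice-path/ballot computation,'' and neither is carried out. In particular you never establish the key structural fact on which any such count must rest, namely that a $231$-avoiding $\omega\in\wt{S}_n$ is extremely rigid: if $\omega_\alpha=\max\{\omega_1,\dots,\omega_n\}$ one shows first that $n\le\omega_\alpha\le n+\alpha-1$ (otherwise a value $>n$ early in a shifted window, forced by the normalization $\sum\omega_i=\binom{n+1}{2}$ to coexist with a value $\le 0$, produces a $231$ together with a left translate), and then that the shifted permutation $u=\sigma_\ell^{\omega_\alpha-n}(\omega)$ has window exactly $[n]$, i.e.\ lies in $S_n\subset\wt{S}_n$. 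With that in hand the enumeration is a finite, $n$-by-$n$ Catalan computation: $231$-avoiders $u\in S_n$ with $u_i=n$ and $u_h<u_j$ for $h<i<j$ number $C_{i-1}C_{n-i}$, summing gives the bound $\tfrac{n+1}{2}C_n=\binom{2n-1}{n}$, and the converse (each $\sigma_r^j(u)$, $0\le j\le n-i$, is again $231$-avoiding) is needed to turn the bound into equality. None of this appears in your outline, and your proposed split $C_n+\binom{2n-1}{n-2}$ is only a numerical identity unless you actually exhibit and count the ``genuinely affine'' avoiders.

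A second concrete objection: your fallback of invoking Proposition~\ref{onlyif} for $p=231$ ``to reduce matters to a finite verification'' does not work. That proposition bounds $\ell(\omega)$ by a quantity growing like $\binom{n}{2}$, so for each fixed $n$ the avoiders form a finite set, but the theorem is an identity for all $n\ge2$; there is no single finite check, and the crude length bound gives no handle on the closed form $\binom{2n-1}{n}$. You would still need a uniform structural description of the avoiders (equivalently, of their windows or affine inversion tables), which is exactly the missing sufficiency argument you yourself flag as the hardest step. As written, the third displayed formula is unproved.
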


To make the proof easier, we first study a few operations on $\wt{S}_n$ that interact with pattern avoidance in a predictable way.

\begin{lemma}
\label{inversemap}
Let $\omega\in\wt{S}_n$ and $p\in S_m$.
Then $\omega$ avoids $p$ if and only if $\omega^{-1}$ avoids $p^{-1}$.
\end{lemma}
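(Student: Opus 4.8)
The plan is to set up a clean correspondence between occurrences of $p$ in $\omega$ and occurrences of $p^{-1}$ in $\omega^{-1}$, using the fact that inversion of a permutation matrix (finite or affine) swaps the roles of positions and values. First I would record the elementary observation that if $\omega\in\wt{S}_n$, then $\omega^{-1}\in\wt{S}_n$ as well: the condition $\omega(i+n)=\omega(i)+n$ is symmetric under inversion, and the sum condition \eqref{eqn:affinesum} is preserved since $\{\omega^{-1}(1),\dots,\omega^{-1}(n)\}$ is again a complete set of residues summing to $\binom{n+1}{2}$ modulo the appropriate shifts (this is standard, so I would state it in one line and cite the group structure already given in Section \ref{sec:back}).

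Next I would unwind the definition of pattern containment. Suppose $\omega$ contains $p\in S_m$, witnessed by indices $i_1<\cdots<i_m$ such that $\omega_{i_1}\cdots\omega_{i_m}$ has the same relative order as $p_1\cdots p_m$. Set $v_k=\omega_{i_k}$ for $1\le k\le m$; then the $v_k$ are $m$ distinct integers whose relative order is that of $p$, and $\omega^{-1}(v_k)=i_k$. The key step is to reinterpret this data from the point of view of $\omega^{-1}$: sort the values $v_1<v_2<\cdots<v_m$ — wait, they are not sorted, so instead let $\sigma\in S_m$ be the permutation with $v_{\sigma(1)}<\cdots<v_{\sigma(m)}$, i.e.\ $\sigma$ lists the $v_k$ in increasing order of value. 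Because $v_1\cdots v_m$ has pattern $p$, we have $v_a<v_b \iff p_a<p_b$, so $\sigma$ is exactly the permutation sorting $p_1\cdots p_m$, which means $\sigma=p^{-1}$ read appropriately; concretely, the sequence of positions $\omega^{-1}(v_{\sigma(1)}),\dots,\omega^{-1}(v_{\sigma(m)})$ equals $i_{\sigma(1)},\dots,i_{\sigma(m)}$, whose relative order is the relative order of the indices, namely the permutation $\sigma^{-1}$ applied to $1\cdots m$. Matching this up, the word $\omega^{-1}_{v_{\sigma(1)}}\cdots\omega^{-1}_{v_{\sigma(m)}}$ sits at increasing positions $v_{\sigma(1)}<\cdots<v_{\sigma(m)}$ and has relative order $p^{-1}$. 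Hence $\omega^{-1}$ contains $p^{-1}$.

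For the converse I would simply invoke the symmetry of the statement: applying the direction just proved to $\omega^{-1}$ and $p^{-1}$, and using $(\omega^{-1})^{-1}=\omega$, $(p^{-1})^{-1}=p$, gives that if $\omega^{-1}$ contains $p^{-1}$ then $\omega$ contains $p$. Contrapositive then yields the ``avoids'' version in both directions, which is the claim. The only step requiring genuine care is the bookkeeping in the middle paragraph — keeping straight that inverting a permutation transposes positions and values, so that an occurrence read ``along positions'' becomes an occurrence read ``along values'' — and the mild subtlety that value indices $v_k$ may be negative or lie outside the base window, which is harmless because pattern occurrences in affine permutations are explicitly allowed to use any integer positions. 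I expect this transposition bookkeeping to be the main (and only real) obstacle, and it is dispatched by carefully naming the sorting permutation as above rather than waving at ``transpose the diagram.''
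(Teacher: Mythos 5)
Your argument is correct and is essentially the same as the paper's proof, which takes an occurrence $\omega_{i_1}\cdots\omega_{i_m}$ of $p$, sets $j_k=\omega_{i_k}$, and reads off an occurrence of $p^{-1}$ in $\omega^{-1}$ at the positions $j_k$ (citing West's Lemma 1.2.4 for the non-affine case). You simply make explicit the sorting permutation $\sigma=p^{-1}$ and the reindexing that the paper leaves implicit; that bookkeeping checks out.
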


\begin{proof}
The proof is the same as the one for non-affine permutations given in \cite[Lemma 1.2.4]{West}.
Suppose $\omega$ contains $p$, so that $\omega_{i_1}\omega_{i_2}\cdots\omega_{i_m}$ is an occurrence of $p$ in $\omega$.
Let $j_k=\omega_{i_k}$ for $1\le k\le m$.
Then $\omega^{-1}_{j_1}\cdots\omega^{-1}_{j_m}$ will give an occurrence of $p^{-1}$ in $\omega^{-1}$.
\end{proof}

Now define a map $\sigma_r:\wt{S}_n\to\wt{S}_n$ by setting
\begin{displaymath}\sigma_r(\omega)_i=\begin{cases}\omega_{i-1}+1,&\text{ if }2\le i\le n,\\\omega_n-n+1,&\text{ if }i=1.\end{cases}\end{displaymath}
This has the effect of shifting the base window of $\omega$ one space to the right, while preserving the relative order of the elements.
The affine inversion table of $\sigma_r(\omega)$ is a barrel shift of the affine inversion table of $\omega$ one space to the right.
Similarly, define $\sigma_\ell=\sigma_r^{-1}$, which will perform a barrel shift one space to the left.
Thus $\sigma_r$ is the length-preserving automorphism of $\wt{S}_n$ of order $n$ obtained by rotating the Coxeter graph one space clockwise.

For example, if $\omega=[5,-4,6,3]\in\wt{S}_4$, which has affine inversion table $(4,0,3,1)$, then
$\sigma_r(\omega)=[0,6,-3,7]$, which has affine inversion table $(1,4,0,3)$.

\begin{lemma}
\label{shift}
Let $\omega\in\wt{S}_n$ and $p\in S_m$.
The following are equivalent.
\begin{enumerate}
\item $\omega$ avoids $p$.
\item $\sigma_r(\omega)$ avoids $p$.
\item $\sigma_\ell(\omega)$ avoids $p$.
\end{enumerate}
\end{lemma}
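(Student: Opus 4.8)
The plan is to show that $\sigma_r$ (and hence $\sigma_\ell=\sigma_r^{-1}$) does not change which patterns occur as subwords of the infinite one-line notation, so that $(1)$, $(2)$, $(3)$ are automatically equivalent. The key preliminary observation is that although $\sigma_r$ is defined by a case split on the base window $[1,n]$, the affine periodicity $\omega(i+n)=\omega(i)+n$ makes it uniform: I would first verify that $\sigma_r(\omega)_i=\omega_{i-1}+1$ holds for \emph{every} $i\in\Z$, not merely for $2\le i\le n$. Indeed, for $i=1$ the definition gives $\omega_n-n+1=\omega_{n-n}+1=\omega_0+1$, and the remaining cases follow by periodicity. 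So, viewed as a bi-infinite sequence, $\sigma_r(\omega)$ is obtained from $\omega$ by shifting all indices up by one and adding the constant $1$ to every entry.

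Next I would note that neither of these two operations affects pattern containment. The reindexing $i\mapsto i+1$ is an order-preserving bijection of $\Z$, so it carries the increasing index tuples $i_1<\cdots<i_m$ witnessing an occurrence of $p$ in $\omega$ bijectively onto those witnessing an occurrence in the reindexed sequence; and adding a fixed constant to all of $\omega_{i_1},\dots,\omega_{i_m}$ leaves their relative order unchanged. Concretely, $\omega_{i_1}\cdots\omega_{i_m}$ is an occurrence of $p$ in $\omega$ if and only if $\sigma_r(\omega)_{i_1+1}\cdots\sigma_r(\omega)_{i_m+1}$ is an occurrence of $p$ in $\sigma_r(\omega)$. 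Hence $\omega$ contains $p$ if and only if $\sigma_r(\omega)$ contains $p$, which is exactly the equivalence of $(1)$ and $(2)$.

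For $(2)\Leftrightarrow(3)$, since $\sigma_\ell=\sigma_r^{-1}$, I would apply the equivalence $(1)\Leftrightarrow(2)$ to the affine permutation $\sigma_\ell(\omega)$ in place of $\omega$: this shows $\sigma_\ell(\omega)$ avoids $p$ if and only if $\sigma_r(\sigma_\ell(\omega))=\omega$ avoids $p$. Combining the two equivalences gives that all three statements are equivalent.

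I do not expect a genuine obstacle here; the only point requiring care is the bookkeeping step of checking that the piecewise definition of $\sigma_r$ is really the uniform shift $\omega_{i-1}+1$, which legitimizes the ``shift-and-translate'' description and makes the pattern-avoidance claim immediate. One could instead argue via the affine inversion table, using that $\sigma_r$ barrel-shifts it, but the direct one-line-notation argument is cleaner and more transparent for pattern avoidance.
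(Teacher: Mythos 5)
Your proof is correct and takes essentially the same route as the paper: both arguments observe that $\sigma_r$ (and $\sigma_\ell$) preserves the relative order of the bi-infinite one-line notation, so an occurrence of $p$ at indices $i_1<\cdots<i_m$ simply shifts to indices $i_1+1<\cdots<i_m+1$ (respectively $i_1-1<\cdots<i_m-1$). Your version is slightly more careful in checking that the piecewise definition of $\sigma_r$ is the uniform map $i\mapsto\omega_{i-1}+1$ on all of $\Z$ and in deducing $(2)\Leftrightarrow(3)$ via $\sigma_\ell=\sigma_r^{-1}$, but the underlying idea is identical.
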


\begin{proof}
The relative order of elements in $\omega$ is unchanged after applying $\sigma_r$ or $\sigma_\ell$.
Hence if $\omega_{i_1}\cdots\omega_{i_m}$ is an occurrence of $p$ in $\omega$, then $\omega_{i_1+1}\cdots\omega_{i_m+1}$
is an occurrence of $p$ in $\sigma_r(\omega)$ and $\omega_{i_1-1}\cdots\omega_{i_m-1}$ is an occurrence
of $p$ in $\sigma_\ell(\omega)$.
\end{proof}

We are now ready to enumerate the affine permutations that avoid a given pattern in $S_3$.

\begin{proof}[Proof of Theorem \ref{calcs1}]
For any $\omega\in\wt{S}_n$, the entries $\omega_1\omega_{1+n}\omega_{1+2n}$ are always an occurrence of 123 in $\omega$.
Hence $f_n^{123}=0$ for all $n$.
If $\omega$ has a descent at $\omega_i$ so that $\omega_i>\omega_{i+1}$,
then there is some translate $i-sn$ such that $\omega_{i-sn}<\omega_{i+1}$.
Hence $\omega_{i-sn}\omega_i\omega_{i+1}$ is an occurrence of 132 in $\omega$.
Also, $\omega_{i+n}>\omega_{i+1}$ so that $\omega_i\omega_{i+1}\omega_{i+n}$ is an occurrence of 213 in $\omega$.
Thus the only affine permutation that can avoid 132 or 213 is the identity.
Hence $f_n^{132}=f_n^{213}=1$.

By Lemma \ref{inversemap} we have $f_n^{231}=f_n^{312}$.
Thus it remains to compute $f_n^{231}$.
So suppose $\omega$ avoids 231.
We first show $\omega$ is in a proper parabolic subgroup that depends on the position and value of the maximal element of the base window.

Let $\alpha$ be the index such that $\omega_\alpha=\max\{\omega_1,\dots,\omega_n\}$.
First suppose $\omega_\alpha>n+\alpha-1$.
Shift $\omega$ to the left $\alpha-1$ times, setting $\nu=\sigma_\ell^{\alpha-1}(\omega)$.
Then $\nu_1=\omega_\alpha-\alpha+1>n$.
Since $\nu$ must satisfy \eqref{eqn:affinesum}, there must exist some $1<j\le n$ with $\nu_j\le0$.
Then $\nu_{1-n}\nu_1\nu_j$ is an occurrence of 231 in $\nu$.
By Lemma \ref{shift}, $\omega$ contains 231, which is a contradiction.
So we must have $n\le\omega_\alpha\le n+\alpha-1$.

Now let $u=\sigma_\ell^{\omega_\alpha-n}(\omega)$.
Set $i=\alpha-\omega_\alpha+n$ so that $u_i=n$.
If $\{u_1,\dots,u_n\}\not=[n]$, then since $u$ must satisfy \eqref{eqn:affinesum}, there is some $1\le j,k\le n$ such that $u_j<0$ and $u_k>n$.
Since $\omega_\alpha$ was chosen to be maximal, we must have $i<k$.
Then $u_iu_ku_{j+n}$ will give an occurrence of 231 in $u$ and hence also in $\omega$ by Lemma \ref{shift}, giving a contradiction.
Hence $u\in S_n\subset\wt{S}_n$.

Let $C_n=\frac{1}{n+1}\binom{2n}{n}$ be the $n^{\text{th}}$ Catalan number.
Recall from \cite{art3} that there are $C_n$ 231-avoiding permutations in $S_n$.
Again, suppose $\omega_\alpha=\max\{\omega_1,\dots,\omega_n\}$ and $\omega_\alpha=n+\alpha-i$, for some $1\le i\le\alpha$.
Then $u=\sigma_\ell^{\omega_\alpha-n}(\omega)$ is an element in $S_n$ with $u_i=n$.
Furthermore, we have $u_h<u_j$ for every pair $h<i<j$.
There are $C_{i-1}C_{n-i}$ such permutations.
Summing over all possible values of $i$ gives
\begin{displaymath}\sum_{i=1}^\alpha C_{i-1}C_{n-i}=\sum_{i=0}^{\alpha-1}C_iC_{n-1-i}\end{displaymath}
many 231-avoiding affine permutations whose maximal value in the base window occurs at index $\alpha$.
Summing over all $1\le\alpha\le n$ then gives
\begin{equation}f_n^{231}\le\sum_{\alpha=1}^n\left(\sum_{i=0}^{\alpha-1}C_iC_{n-1-i}\right).\label{eqn:messy231}\end{equation}
Using the defining recurrence, \begin{equation}C_n=\sum_{i=0}^{n-1}C_iC_{n-1-i},\end{equation} for the Catalan numbers, \eqref{eqn:messy231} simplifies to
\begin{equation}f_n^{231}\le\frac{(n+1)}{2}C_n=\binom{2n-1}{n}.\label{eqn:231}\end{equation}

Conversely, if $u\in S_n\subset\wt{S}_n$ is a 231-avoiding permutation with $u_i=n$,
then $\sigma_r^{j}(u)$ will be a 231-avoiding affine permutation for any $0\le j\le n-i$.
Thus we actually have equality in \eqref{eqn:231}, completing the proof.
\end{proof}

\section{Generating Functions for Patterns in $S_4$}
\label{sec:calcs2}
We now look at pattern avoidance for patterns in $S_4$.
There are 24 patterns to consider, although for all but three patterns, $f^p(t)$ is easy to compute.
First let \begin{displaymath}P=\{1432,2431,3214,3241,3421,4132,4213,4231,4312,4321\}.\end{displaymath}
By Theorem \ref{mainthm}, if $p\in P$, then $f_n^p=\infty$, so $f^p(t)$ is not defined.

\begin{thm}
\label{calcs2}
We have
\begin{align}
f^{1234}(t)&=0,\\
f^{1243}(t)=f^{1324}(t)=f^{2134}(t)=f^{2143}(t)&=\sum_{n=2}^\infty t^n,\\
f^{1342}(t)=f^{1423}(t)=f^{2314}(t)=f^{3124}(t)&=\sum_{n=2}^\infty\binom{2n-1}{n}t^n.
\end{align}
\end{thm}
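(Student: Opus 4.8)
The plan is to peel an extreme entry off each pattern in the statement, reducing it either to a pattern in $S_3$ handled by Theorem~\ref{calcs1} or to the single-descent pattern $21$, whose only avoider in $\wt S_n$ is the identity. Everything rests on three elementary \emph{absorption} facts, each an immediate consequence of the periodicity $\omega_{i+n}=\omega_i+n$. Fix $\omega\in\wt S_n$ and $q\in S_k$, and suppose $\omega_{i_1}\cdots\omega_{i_k}$ is an occurrence of $q$. (i) The translates $\omega_{i_k+Nn}=\omega_{i_k}+Nn$ for $N$ large lie arbitrarily far to the right and are arbitrarily large, so $\omega$ contains $q$ if and only if it contains the pattern $q(k{+}1)\in S_{k+1}$ obtained by appending a new largest entry, and likewise if and only if it contains $q(k{+}1)(k{+}2)$. (ii) Symmetrically, the translates $\omega_{i_1-Nn}$ show that $\omega$ contains $q$ if and only if it contains $1(q{+}1)\in S_{k+1}$, obtained by prepending a new smallest entry (here $q{+}1$ denotes $q$ with every entry increased by $1$). (iii) If $\omega$ has a descent $\omega_c>\omega_{c+1}$, then translating this pair far to the right shows $\omega$ contains $q$ if and only if it contains $q(k{+}2)(k{+}1)$. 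The reverse implications in (i)--(iii) are trivial, since $q$ is a subpattern of each enlarged pattern.

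With these in hand the theorem becomes short casework. I would first note that $\omega_1,\omega_{1+n},\omega_{1+2n},\omega_{1+3n}$ is always increasing, so $\omega$ contains $1234$ and $f^{1234}_n=0$. For the four patterns counted by $\sum t^n$, I would use that, by the proof of Theorem~\ref{calcs1}, the identity is the unique element of $\wt S_n$ avoiding $132$, the unique one avoiding $213$, and trivially the unique one with no descent. Then $1243=1(243)$ with $243$ of relative order $132$, so by (ii) $\omega$ avoids $1243$ iff $\omega$ avoids $132$; similarly $1324=1(324)$ with $324$ of relative order $213$. For $2134=(21)34$, fact (i) with $q=21$ shows that a non-identity $\omega$, having a descent, contains $21$ hence contains $2134$. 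For $2143$, a descent $\omega_i>\omega_{i+1}$ gives the occurrence $\omega_i,\omega_{i+1},\omega_{i+n},\omega_{i+1+n}$ of relative order $2143$ (this is (iii) with $q=21$). In each case only the identity avoids the pattern, so $f^p_n=1$ and $f^p(t)=\sum_{n\ge2}t^n$.

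For the four patterns counted by $\sum\binom{2n-1}{n}t^n$: since $2314=(231)4$ and $3124=(312)4$, fact (i) gives that $\omega$ avoids $2314$ iff it avoids $231$ and avoids $3124$ iff it avoids $312$; since $1342=1(342)$ with $342$ of relative order $231$ and $1423=1(423)$ with $423$ of relative order $312$, fact (ii) gives that $\omega$ avoids $1342$ iff it avoids $231$ and avoids $1423$ iff it avoids $312$. Hence $f^{2314}_n=f^{1342}_n=f^{231}_n$ and $f^{3124}_n=f^{1423}_n=f^{312}_n$, and by Theorem~\ref{calcs1} all of these equal $\binom{2n-1}{n}$, giving the stated generating functions. (One could instead pass from $312$ to $231$ via Lemma~\ref{inversemap}, as $312^{-1}=231$.)

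There is no genuine difficulty here; the one point demanding care is the bookkeeping, namely checking for each of the nine patterns that the extreme entry being removed sits at an end where a periodic-translate argument applies. For example $2143$ is neither ``$q$ with a new maximum appended'' nor ``a new minimum prepended to $q$'', and in general a new \emph{minimum} cannot be appended at the right end of a pattern occurrence, so the three reductions (i)--(iii) must be matched to the patterns correctly.
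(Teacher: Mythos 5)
Your proof is correct and follows essentially the same route as the paper: strip an extremal entry from each $S_4$ pattern and supply it for free from a periodic translate, thereby reducing everything to Theorem~\ref{calcs1}; your three absorption facts just make systematic what the paper does explicitly for $1342$ and dismisses with ``similarly'' for the remaining patterns. One small correction: for $2143$ the specific quadruple $\omega_i,\omega_{i+1},\omega_{i+n},\omega_{i+1+n}$ need not have relative order $2143$ when $\omega_i-\omega_{i+1}\ge n$ (e.g.\ $[4,-1]\in\wt{S}_2$ yields $4,-1,6,1$, an occurrence of $3142$); you need the translate $\omega_{i+sn},\omega_{i+1+sn}$ for $s$ sufficiently large, which is exactly what your correctly stated fact (iii) provides.
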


\begin{proof}
As in Theorem \ref{calcs1} there are no affine permutations avoiding 1234, and only the identity permutation avoids 1243, 1324, 2134, or 2143.
If $\omega_{i_1}\omega_{i_2}\omega_{i_3}$ is an occurrence of 231 in $\omega$,
then there is some translate $i_1-sn$ such that $\omega_{i_1-sn}<\omega_{i_3}$.
Hence $\omega_{i_1-sn}\omega_{i_1}\omega_{i_2}\omega_{i_3}$ is an occurrence of 1342 in $\omega$.
Conversely, if $\omega$ avoids 231, then it must also avoid any pattern containing 231, namely 1342. 
This shows $f^{1342}_n=f^{231}_n$.
Similarly, we also have $f^{1423}_n=f^{2314}_n=f^{3124}_n=f^{231}_n$.
\end{proof}

Based on some initial calculations, we also have the following conjectures for the remaining three patterns in $S_4$.

\begin{conj}
The following equalities hold.
\begin{align}
f^{3142}_n&=\sum_{k=0}^{n-1}\frac{(n-k)}{n}\binom{n-1+k}{k}2^k\label{eqn:conj1}\\
f^{3412}_n=f^{4123}_n&=\frac{1}{3}\sum_{k=0}^n\binom{n}{k}^2\binom{2k}{k}\label{eqn:conj2}
\end{align}
\end{conj}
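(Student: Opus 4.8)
\emph{Proof proposal.} The plan is to reproduce, for each of $p=3142$ and $p=3412$ (with $p=4123$ handled alongside $3412$), the three-step structure of the proof of Theorem~\ref{calcs1}: (i) use the shift maps $\sigma_r,\sigma_\ell$ of Lemma~\ref{shift}, the inverse map of Lemma~\ref{inversemap}, and the parabolic description of Proposition~\ref{parabolic} to reduce every $p$-avoider to a combinatorial \emph{normal form}; (ii) enumerate the normal forms by a Catalan/ballot/binomial refinement; (iii) collapse the resulting nested sum to the stated closed form via a binomial identity, exactly as $\sum_\alpha\sum_i C_iC_{n-1-i}$ collapsed to $\tfrac{n+1}2C_n$ in Theorem~\ref{calcs1}.

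For $p=3142$ I would first try to show, imitating the argument that pinned the maximal window entry of a $231$-avoider into a narrow range, that every $3142$-avoiding $\omega\in\wt S_n$ is a shift $\sigma_r^{\,j}(u)$ of a $3142$-avoiding element $u\in S_n\subset\wt S_n$ subject to one extra normalization on the position of a distinguished entry (small cases, e.g.\ $n=2,3$, already bear this out). Granting that, step (ii) becomes the enumeration of the admissible pairs $(u,j)$, which I expect to organize as: a ballot number $\tfrac{n-k}{n}\binom{n-1+k}{k}$ counting the $3142$-avoiding cores carrying a fixed value $k$ of the relevant statistic (the statistic that also records how many shifts $j$ are admissible), times a factor $2^k$ coming from a binary choice available at each of $k$ places in the core. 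This would give the summand of \eqref{eqn:conj1} directly; the identity $\tfrac{n-k}{n}\binom{n-1+k}{k}=\binom{n-1+k}{k}-\binom{n-1+k}{k-1}$ should keep the bookkeeping in step (iii) manageable.

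For $p=3412$ the normal form is genuinely more delicate: here $f_n^{3412}$ strictly exceeds the number of shifts of elements of $S_n$ (already $f_2^{3412}=5$, whereas only three elements of $\wt S_2$ are shifts of permutations in $S_2$), so there are $3412$-avoiders that wrap nontrivially around several windows. Using the explicit bound $\ell(\omega)\le 3\binom n2$ valid for this pattern to limit the wrapping, I would classify the $3412$-avoiders as: a choice of a $k$-subset of window positions and a $k$-subset of values singling out a ``high ascending block'', together with $\binom{2k}{k}$ worth of interleaving data describing how that block sits across the window boundary relative to the complementary low block, yielding $\binom nk^2\binom{2k}{k}$ configurations with parameter $k$. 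The global factor $\tfrac13$ in \eqref{eqn:conj2} I expect to emerge in step (iii) from the summation (as the $\tfrac12$ did for $231$), not from an overcount. Finally, $f_n^{3412}=f_n^{4123}$ is the one equality \emph{not} supplied by Lemma~\ref{inversemap} --- $3412$ is an involution while $4123^{-1}=2341$ --- so I would prove it either by a direct bijection between $3412$- and $4123$-avoiders in $\wt S_n$, or by carrying out steps (i)--(ii) for $4123$ independently and matching the two models term by term.

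The main obstacle is step (i) for $3412$: unlike the $S_3$ patterns, whose avoiders collapse to finite permutations up to shift, one must here identify \emph{exactly} which multi-window configurations avoid $3412$ and count them with the correct multiplicities; an off-by-one in the permitted wrapping will break the closed form, and getting the pair of independent subset choices ($\binom nk^2$) and the interleaving factor ($\binom{2k}{k}$) to appear cleanly is where essentially all the work lies. A secondary difficulty is tracking the constant $\tfrac13$ through the final binomial manipulation, and --- should the bijective route fail --- establishing $f_n^{3412}=f_n^{4123}$ without the aid of the symmetry maps.
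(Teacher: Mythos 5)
There is nothing in the paper to compare against here: the statement you are addressing is stated as a \emph{conjecture}, supported only by initial computations and a match with OEIS sequences A064062 and A087457; the paper supplies no proof, and the length bound $\ell(\omega)\le 3\binom{n}{2}$ for $3412$-avoiders is only a remark about Proposition~\ref{onlyif}, not the start of an enumeration. Your submission, by your own phrasing (``I would first try,'' ``I expect,'' ``should emerge''), is likewise a program rather than a proof, and the decisive steps are exactly the ones left open. Concretely: (1) for $3142$ the claimed normal form --- that every avoider is $\sigma_r^{\,j}(u)$ for a $3142$-avoiding $u\in S_n$ with one extra normalization --- is asserted from small cases but not proved, and the statistic $k$ that is supposed to produce the ballot number $\tfrac{n-k}{n}\binom{n-1+k}{k}$ and the binary choices giving $2^k$ is never identified, so step (ii) has no content beyond the shape of the target formula; (2) for $3412$ you correctly observe that avoiders do not reduce to shifts of elements of $S_n$, but the classification of the genuinely affine (``wrapping'') configurations is precisely the hard part, and proposing that the answer should factor as $\binom{n}{k}^2\binom{2k}{k}$ because the conjectured formula does is reverse engineering, not an argument; the length bound limits which $\omega$ can avoid $3412$ but gives no structural description of them; (3) the equality $f^{3412}_n=f^{4123}_n$ is, as you note, not supplied by Lemma~\ref{inversemap} (indeed $4123^{-1}=2341$, and the reverse--complement symmetry also sends $4123$ to $2341$ while fixing $3412$), and you offer no bijection or independent count, so this part too remains entirely open.

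In short, the proposal does not close any gap: it restates the conjectured formulas together with a plausible template borrowed from the proof of Theorem~\ref{calcs1}, but every step that would distinguish a proof from the conjecture itself --- the normal form for avoiders of each pattern, the exact refinement statistic, the verification that the proposed configurations are in bijection with avoiders (no overcount from the factor $\tfrac13$), and the $3412$/$4123$ equality --- is missing. Treat this as a research plan; as it stands the statement remains exactly as open as it is in the paper.
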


Note that \eqref{eqn:conj1} is sequence A064062 and \eqref{eqn:conj2} is sequence A087457 in \cite{Sloanes}.
It is also worth comparing \eqref{eqn:conj2} to the number of 3412-avoiding, non-affine permutations given in \cite[\S7]{Ges:90} as
\begin{equation}
u_3(n)=2\sum_{k=0}^n\binom{n}{k}^2\binom{2k}{k}\frac{3k^2+2k+1-n-2kn}{(k+1)^2(k+2)(n-k+1)}.
\end{equation}

\section*{Acknowledgements}
\label{sec:ack}
The author would like to thank Brant Jones for inspiring him to ask this question
and Sara Billey for all of her guidance and helpful conversations in the construction of this paper.
He is also indebted to Neil Sloane, whose encyclopedia of integer sequences was invaluable for forming the conjectures in this paper.

\bibliographystyle{abbrvnat}

\label{sec:biblio}

\end{document}